\documentclass[a4paper,12pt,final]{amsart}
\usepackage{times,a4wide,mathrsfs, nameref,showkeys,amssymb,amsmath,amsthm,enumerate,xypic,tikzsymbols,dsfont, stix}

\newcommand{\C}{\mathbb{C}}

\newcommand{\QQ}{\mathbb{Q}}

\newcommand{\PP}{\mathbb{P}}

\DeclareMathOperator{\CH}{CH}

\DeclareMathOperator{\sym}{Sym}

\newtheorem{theorem}{Theorem}[section]
\newtheorem{claim}[theorem]{Claim}

\newtheorem{proposition}[theorem]{Proposition}
\newtheorem{conjecture}[theorem]{Conjecture}

\newtheorem{convention}{Conventions}

\newtheorem{notation}[theorem]{Notation}

\newtheorem{nonumberingt}{Acknowledgements}

\begin{document}

\author[Robert Laterveer]
{Robert Laterveer}

\address{Institut de Recherche Math\'ematique Avanc\'ee,
CNRS -- Universit\'e 
de Strasbourg,\
7 Rue Ren\'e Des\-car\-tes, 67084 Strasbourg CEDEX,
FRANCE.}
\email{robert.laterveer@math.unistra.fr}

\title{Variation on a theme of Beauville--Voisin}

\begin{abstract} The Beauville--Voisin conjecture is about the Chow ring of hyper-K\"ahler varieties. We consider a variant version taking place in the ring of algebraic cycles modulo algebraic equivalence.
We prove this variant version is true in codimension at least 3 for Fano varieties of lines on cubic fourfolds, and for double EPW sextics.
\end{abstract}

\thanks{\textit{2020 Mathematics Subject Classification:} 14C15, 14C25, 14C30, 14J42}
\keywords{Chow groups, motives, hyper-K\"ahler varieties, Beauville--Voisin conjecture, Beauville ``splitting property'' conjecture}
\thanks{This work is supported by the Agence Nationale de la Recherche under grant ANR-20-CE40-0023.}

\maketitle

\section{Introduction}

Given a complex smooth projective variety $X$, let $A^\ast(X)=\oplus_i A^i(X)$ denote the Chow ring with $\QQ$-coefficients \cite{F}, \cite{Vo}.
It is expected that for hyper-K\"ahler varieties, the Chow ring has a particularly nice structure. Most famously, Beauville \cite{Beau} and Voisin \cite{V17} have put forth the following conjecture:

\begin{conjecture}[Beauville--Voisin \cite{Beau}, \cite{V17}]\label{conj} Let $X$ be a hyper-K\"ahler variety. The $\QQ$-algebra
\[   \langle A^1(X), c_j(T_X)\rangle\ \ \subset\ A^\ast(X)  \]
generated by divisors and Chern classes of the tangent bundle injects into cohomology, under the cycle class map.
\end{conjecture}

Conjecture \ref{conj} is true in dimension 2 \cite{BV}. In dimension larger than 2, Conjecture \ref{conj} has been verified for low-dimensional Hilbert schemes of K3 surfaces \cite{V17}, for Fano varieties of lines on cubic fourfolds \cite{V17}, for generalized Kummer varieties \cite{Fu} and for double EPW sextics \cite{L}. Otherwise, this is wide open.

A few years ago, I proposed a variant version of Conjecture \ref{conj} \cite{24.2}. This variant takes place in the ring $B^\ast(X)$ of algebraic cycles with $\QQ$-coefficients modulo {\em algebraic equivalence\/}.

\begin{conjecture}[\cite{24.2}]\label{conj2} Let $X$ be a hyper-K\"ahler variety. The $\QQ$-algebra
\[   \langle B^1(X), B^2(X), c_j(T_X)\rangle\ \ \subset\ B^\ast(X)  \]
generated by divisors, codimension 2 cycles and Chern classes of the tangent bundle injects into cohomology, under the cycle class map.
\end{conjecture}

Conjecture \ref{conj2} does not follow from Conjecture \ref{conj} as such. However, as explained in \cite[Introduction]{24.2}, Conjecture \ref{conj2} follows from the conjectural framework underlying Conjecture \ref{conj}. The idea is that for hyper-K\"ahler varieties $A^\ast(X)$ and $B^\ast(X)$ should be bigraded rings (under the intersection product), and the subrings $A^\ast_{(0)}(X)$ and $B^\ast_{(0)}(X)$ should inject into cohomology (under the cycle class map). Since it is expected that Chern classes are in $A^\ast_{(0)}(X)$, that $A^1(X)=A^1_{(0)}(X)$ and $B^2(X)=B^2_{(0)}(X)$ (this is because the deepest level of the conjectural Bloch--Beilinson filtration is supposed to be algebraically trivial, cf. \cite{Ja}), this motivates Conjectures \ref{conj} and \ref{conj2}.

The aim of this paper is to present some modest evidence for Conjecture \ref{conj2}, by looking at two of the most famous locally complete families of hyper-K\"ahler fourfolds:

\begin{theorem}[=Theorems \ref{main1} and \ref{main2}] Let $X$ be either the Fano variety of lines on a smooth cubic fourfold, or a double EPW sextic. Then Conjecture \ref{conj2} holds for $X$ in codimension $\ge 3$. Consequently, there is equality
  \[    B^2(X)\cdot B^1(X) = B^1(X)\cdot h^2\ \ \ \hbox{in}\ B^3(X)\ ,\]
  where $h\in B^1(X)$ denotes the polarization.
  \end{theorem}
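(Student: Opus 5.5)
The plan is to work in the bigraded framework that the two families are known to carry. For $X$ the Fano variety of lines on a smooth cubic fourfold, Shen--Vial construct a multiplicative Chow--K\"unneth decomposition, so $A^\ast(X)=\oplus A^i_{(j)}(X)$ is a bigraded ring with $A^1(X)=A^1_{(0)}(X)$ and $c_j(T_X)\in A^\ast_{(0)}(X)$. For double EPW sextics the analogous statement, including multiplicativity, is available through work on the Beauville--Voisin conjecture for them \cite{L}. Since algebraic equivalence is coarser than rational equivalence, the bigrading descends to $B^\ast(X)$, at least up to the usual ambiguity, as a quotient $B^i_{(j)}(X)$. The first step is to show $B^2(X)=B^2_{(0)}(X)$. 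The graded pieces of $A^2(X)$ are $A^2_{(0)}$ and $A^2_{(2)}$; the piece $A^2_{(1)}$ vanishes because $H^3(X)=0$. The task is then that $A^2_{(2)}(X)$ consists of algebraically trivial cycles.

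For the Fano variety I will use the description $A^2_{(2)}(F)=I_\ast A^4_{hom}(F)$, or equivalently the image under the incidence correspondence of $A^3_{hom}$ of the cubic. The Chow group $A^3_{hom}(Y)$ of the cubic fourfold is algebraically trivial: $A^3$ of a cubic fourfold is spanned by lines, and the homologically trivial part is generated by differences of lines in the connected variety $F$. Correspondences preserve algebraic triviality, so $B^2_{(2)}=0$. For EPW sextics I will argue similarly, using that the relevant piece comes from a correspondence applied to a group generated by algebraically trivial cycles. Alternatively, I will use that $A^2_{(2)}$ is spanned by cycles of the form $\Gamma_\ast(a)$ with $a$ a $0$-cycle of degree $0$ on some variety, which are algebraically trivial.

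Given this, the subalgebra generated by $B^1$, $B^2$ and the Chern classes lies in $B^\ast_{(0)}(X)$ in codimension $\ge3$. It remains to show that $B^3_{(0)}$ and $B^4_{(0)}$ inject into cohomology. For $B^4_{(0)}=\QQ$, injectivity follows from the degree map. For codimension 3, the key identity is $A^3_{(0)}(X)=A^1(X)\cdot h^2$; I must in any case show that $A^2_{(0)}\cdot A^1\subset A^1\cdot h^2+(\text{hom. trivial})$ injects. For the Fano variety, Shen--Vial prove $A^3_{(0)}(F)=h^2\cdot A^1(F)$ via the relation $A^2_{(0)}=\langle h^2, c_2, A^1\cdot A^1\rangle$ together with the quadratic relations on $A^1$. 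Then $A^1\cdot h^2$ injects into $H^6$ by hard Lefschetz, which is an isomorphism $H^2\to H^6$ compatible with the cycle class map, together with the injectivity $A^1\hookrightarrow H^2$. For double EPW sextics I expect an analogous statement from \cite{L}, namely that $A^3_{(0)}$ is generated by $A^1\cdot h^2$. The consequence $B^2\cdot B^1=B^1\cdot h^2$ then follows: $B^2\cdot B^1\subset B^3_{(0)}$ injects into cohomology, and its image lies in the image of $B^1\cdot h^2$ by hard Lefschetz, since the cohomology class of any cycle in $B^3$ is in $H^6\cap$ algebraic $= h^2\cdot NS$.

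The main obstacle is the step $B^2=B^2_{(0)}$, that is, algebraic triviality of $A^2_{(2)}$, especially for EPW sextics. Failing a direct description there, I would try to deduce it via the Fano variety case and a common deformation or motivic argument: the motive of $X$ is built from a surface-like piece (the $H^4$ transcendental part, of K3 type), and then $A^2_{(2)}$ is controlled by $A^2_{hom}$ of a K3-type motive. By Bloch--Srinivas-type arguments, that group is generated by differences of points on a curve, hence is algebraically trivial.
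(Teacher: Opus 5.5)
\textbf{Fano variety case.} Your argument here is essentially the paper's. $A^2_{(2)}(X)$ is the image, under a Shen--Vial correspondence, of homologically (hence algebraically) trivial cycles, so $B^2(X)=B^2_{(0)}(X)$. Then $B^1(X)\cdot B^2_{(0)}(X)\subset B^3_{(0)}(X)$, which injects into cohomology by \cite[Theorem 3.3]{SV}.

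One correction: the product step does not follow from Shen--Vial for arbitrary $X$. Their multiplicativity theorem \cite[Theorem 3]{SV} is only for the very general Fano variety. What you need, and all you need, is $A^1(X)\cdot A^2_{(0)}(X)\subset A^3_{(0)}(X)$, which holds for every smooth cubic fourfold by \cite[Proposition A.7]{FLV}. Neither $A^3_{(0)}=A^1\cdot h^2$ nor a list of generators for $A^2_{(0)}$ is needed, and the latter is not an established result.

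\textbf{Double EPW sextic case: the genuine gap.} Your argument needs a bigrading $A^2=A^2_{(0)}\oplus A^2_{(2)}$, the inclusion $A^1\cdot A^2_{(0)}\subset A^3_{(0)}$, and injectivity of $A^3_{(0)}$. The Beauville--Voisin-type results of \cite{L} supply none of these. What they give is injectivity of $\langle A^1(X),A^2(X)^+,c_j(T_X)\rangle\cap A^3(X)$. Even granting a bigrading, your routes to algebraic triviality of ``$A^2_{(2)}$'' are not proofs:
\begin{enumerate}
\item no correspondence is exhibited;
\item the two locally complete families are differently polarized (Beauville--Bogomolov squares $6$ and $2$), so no algebraic family carries statements about cycles modulo algebraic equivalence from one to the other;
\item a general double EPW sextic has no associated K3 surface whose motive controls $A^2(X)$ through a correspondence, so the Bloch--Srinivas-type step has nothing to act on.
\end{enumerate}

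\textbf{The missing idea} is to replace the bigrading by the anti-symplectic covering involution $\iota$. Since $H^2(X,\QQ)^+=\QQ h$ and $H^4(X,\QQ)=\sym^2H^2(X,\QQ)$, cup product with $h$ gives an isomorphism $H^2(X,\QQ)^-\cong H^4(X,\QQ)^-$. Hence every $\gamma^-\in B^2(X)^-$ is homologous to $h\cdot D$ for some $D\in B^1(X)^-$. The relation $\Delta_X^-=F+I\cdot G$ in $B^4(X\times X)$ of Proposition \ref{deltaminus} then forces $\gamma^-=h\cdot D$:
\begin{enumerate}
\item $F$ is decomposable, so it kills $B^\ast_{hom}(X)$;
\item each summand of $I\cdot G$ acts on $B^2_{hom}(X)$ through $B^0_{hom}(X)$, $B^1_{hom}(X)$ or $B^4_{hom}(X)$, all of which are zero.
\end{enumerate}
Thus $B^2(X)=B^2(X)^+\oplus h\cdot B^1(X)^-$. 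So $R^3(X)=B^1(X)\cdot B^2(X)$ lies in the image of $\langle A^1(X),A^2(X)^+,c_j(T_X)\rangle\cap A^3(X)$, which injects into cohomology by \cite[Theorem 3.1]{L}.

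Note that this never shows $B^2_{hom}(X)=0$, nor that its invariant part vanishes. The invariant part is handled by the injectivity theorem of \cite{L}, not by algebraic triviality.
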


In \cite[Theorem 2.1]{24.2}, I proved this theorem for Hilbert squares of K3 surfaces, and for the {\em very general\/} Fano variety of lines (the improvement to {\em all\/} Fano varieties of lines is made possible thanks to \cite{FLV}, as will be clear from the proof below). The result for double EPW sextics is entirely new, and is made possible by recent results on the Chow ring of double EPW sextics \cite{LV}, \cite{L}, \cite{Zh}. 

The codimension 2 part of Conjecture \ref{conj2} seems to remain out of reach for any hyper-K\"ahler variety of dimension larger than 2.
Indeed, it is conjectured that the Griffiths group $B^2_{hom}(X)$ vanishes for any variety $X$ with $H^3(X,\QQ)$ supported in codimension 1 (this applies in particular to hyper-K\"ahler varieties) \cite[Introduction]{Ja}. However, I don't believe there are any examples of varieties of dimension larger than 2 and with non-vanishing geometric genus for which this conjecture has been verified.

\vskip0.5cm
\begin{convention} In this paper, the word {\sl variety\/} will mean a reduced irreducible scheme of finite type over $\C$. A {\sl subvariety\/} will refer to a (possibly reducible) reduced subscheme which is equidimensional. 

{\bf All Chow groups will be with rational coefficients}: we denote by $A^i(Y)$ the Chow group of codimension $i$ cycles on $Y$ with $\QQ$-coefficients, and we write $B^i(Y)$ for the $\QQ$-vector space of codimension $i$ algebraic cycles modulo algebraic equivalence.
The notations $A^i_{hom}(Y)$ and $B^i_{hom}(Y)$ will be used to denote the subgroup of homologically trivial cycles.
For a morphism $f\colon X\to Y$, we write $\Gamma_f\in A^\ast(X\times Y)$ for the graph of $f$.

\end{convention}

\vskip0.5cm

\section{Fano varieties of lines}

\begin{notation} Let $X=F(Y)$ be the Fano variety of lines of a smooth cubic fourfold $Y\subset\PP^5$. Then $X$ is a hyper-K\"ahler variety of $K3^{[2]}$-type, and these
$X$ form a 20-dimensional locally complete family \cite{BD}.

We will write 
  \[ A^\ast(X)= A^\ast_{(\ast)}(X)\]
  for the Shen--Vial decomposition of the Chow ring \cite[Part 3]{SV} (NB: in loc. cit. this is written $\CH^\ast(X)_\ast$).
  
  It is expected that $ A^\ast_{(\ast)}(X)$ is a bigraded ring, under the intersection product (this is currently known only for the very general Fano variety of lines, cf. \cite[Theorem 3]{SV}).
  It is also expected that $A^i_{(0)}(X)$ injects into cohomology under the cycle class map; this is currently open for $i=2$.
\end{notation}

The first result of this paper is the following:

\begin{theorem}\label{main1} Let $X=F(Y)$ be the Fano variety of lines of a smooth cubic fourfold $Y\subset\PP^5$.
Let
  \[ R^\ast(X):=\bigl\langle  B^1(X), B^2(X) \bigr\rangle\ \ \subset\ B^\ast(X) \]
  be the graded $\QQ$-algebra generated by divisors and codimension 2 cycles.
  Then $R^i(X)$ injects into cohomology via the cycle class map for $i\ge 3$.
  
  Moreover, 
    \[ B^2(X)\cdot B^1(X)= B^1(X)\cdot h^2\ \ \ \hbox{in}\ B^3(X)\ ,\]
    where $h\in B^1(X)$ is the polarization.
  \end{theorem}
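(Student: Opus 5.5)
The plan is to lift everything from $B^\ast(X)$ to the Chow ring and work with the Shen--Vial decomposition, using \cite{FLV} for multiplicativity on \emph{all} Fano varieties of lines.

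First, $R^4(X)\subset B^4(X)\cong\QQ$, since any two points of the connected variety $X$ are algebraically equivalent. So $R^4(X)$ injects into $H^8(X,\QQ)$ via the degree, and only $i=3$ needs work. The space $R^3(X)$ is spanned by products $b\cdot d$ with $b\in B^2(X)$, $d\in B^1(X)$, together with triple products of divisors. The latter are themselves of the form (element of $B^2$)$\cdot$(divisor), so it suffices to control $B^2(X)\cdot B^1(X)$.

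The key step is to show that $B^2(X)$ is spanned by the image of $A^2_{(0)}(X)$. By Shen--Vial \cite[Part 3]{SV} we have $A^2(X)=A^2_{(0)}(X)\oplus A^2_{(2)}(X)$. The piece $A^2_{(2)}(X)$ is obtained from $A^4_{(4)}(X)=A^4_{hom}(X)$ by a correspondence: it is the image under the Fourier transform / action of the class $L$. Homologically trivial $0$-cycles are algebraically trivial, and correspondences preserve algebraic equivalence. Hence $A^2_{(2)}(X)$ maps to zero in $B^2(X)$. I expect this to be the main point to check carefully: one must verify that the description of $A^2_{(2)}$ as the image of $A^4_{(4)}$ under a correspondence holds for every smooth cubic fourfold, not only the very general one. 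Shen--Vial's construction of the decomposition itself does not require very generality, so I believe it does. Likewise, $A^1(X)=A^1_{(0)}(X)$.

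Next I would invoke \cite{FLV}, which shows that the Shen--Vial decomposition comes from a multiplicative Chow--K\"unneth decomposition for all $X=F(Y)$. This gives $A^2_{(0)}(X)\cdot A^1_{(0)}(X)\subset A^3_{(0)}(X)$. By Shen--Vial, $A^3_{(0)}(X)=h^2\cdot A^1(X)$, and this space injects into $H^6(X,\QQ)$. Indeed, $h^2\colon A^1(X)\cong NS(X)_\QQ\to H^6$ is injective by hard Lefschetz. Combining with the previous step, the image of $B^2(X)\cdot B^1(X)$ in $B^3(X)$ lies in the image of $A^3_{(0)}(X)=h^2\cdot A^1(X)$. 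This proves the equality $B^2(X)\cdot B^1(X)=B^1(X)\cdot h^2$; the inclusion $\supset$ is trivial since $h^2\in B^2(X)$.

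Finally, for injectivity: if $c\in R^3(X)$ is homologically trivial, lift it to $\tilde c\in A^3_{(0)}(X)$. This lift is still homologically trivial, hence $\tilde c=0$ in $A^3(X)$ by the injectivity above, and so $c=0$ in $B^3(X)$.
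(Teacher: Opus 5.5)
Your proposal is correct and follows essentially the same route as the paper. You show that $B^2(X)$ is the image of $A^2_{(0)}(X)$ because $A^2_{(2)}(X)$ is the image, under a correspondence, of homologically (hence algebraically) trivial zero-cycles; you then combine the Fu--Laterveer--Vial inclusion $A^1(X)\cdot A^2_{(0)}(X)\subset A^3_{(0)}(X)$ with the injectivity of $A^3_{(0)}(X)$ into $H^6(X,\QQ)$.

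There is one indexing slip that does not affect the argument. Shen--Vial give $A^2_{(2)}(X)=L_\ast A^4_{(2)}(X)$, not $L_\ast A^4_{(4)}(X)$. In fact $L_\ast$ kills $A^4_{(4)}(X)$, and $A^4_{hom}(X)=A^4_{(2)}(X)\oplus A^4_{(4)}(X)$ rather than $A^4_{(4)}(X)$. Since $A^4_{(2)}(X)\subset A^4_{hom}(X)$, your argument goes through unchanged.
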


  \begin{proof} 
  For the very general Fano variety of lines, I already proved this result in \cite[Theorem 2.1]{24.2}.
  
The ``moreover'' part follows from the first statement combined with the ``weak Lefschetz'' isomorphism in cohomology
    \[  \cup h^2\colon\ \ H^2(X,\QQ)\ \xrightarrow{\cong}\ H^6(X,\QQ)\ .\]
    As for the first statement, observe that this is trivially true for $i=4$ (algebraic and homological equivalence coincide for zero-cycles), and so one only needs to treat the case $i=3$.  
       
    To prove the first statement for $i=3$, we will use the Shen--Vial bigrading of the Chow ring of $X$. There is an induced bigrading of $B^\ast(X)$, which can be defined as
      \[  B^i_{(j)}(X):= (\pi^{2i-j}_X)_\ast B^i(X)\ ,\]
      where $\{\pi^\ast_X\}$ is the Chow--K\"unneth decomposition constructed in \cite{SV}. By construction, the subgroup $B^i_{(j)}(X)$ is the image of $A^i_{(j)}(X)$ under the
      natural surjective map $A^\ast(X)\to B^\ast(X)$. We now claim the following:
        
    \begin{claim} $B^2(X)=B^2_{(0)}(X)$.
       \end{claim}
  
The claim implies the theorem. Indeed, it is known that
   \[  A^1(X)\cdot A^2_{(0)}(X)\ \subset \ A^3_{(0)}(X)\  \]
(\cite[Proposition A.7]{FLV}; this is the crucial new ingredient that wasn't available at the time I wrote \cite{24.2}), and so a fortiori 
 \[ B^1(X)\cdot B^2(X)=    B^1(X)\cdot B^2_{(0)}(X)\ \subset \ B^3_{(0)}(X)\ . \]  
 But $A^3_{(0)}(X)$ is known to inject into cohomology under the cycle class map (this is \cite[Theorem 3.3]{SV}; one could also cite \cite[Lemma 2.3]{24.2} or \cite[Theorems 1.7 and 1.9]{Kre}), and so a fortiori $B^3_{(0)}(X)$ injects into cohomology.
 
 It remains to prove the claim. This was already done in \cite[Claim 2.2]{24.2}; let us briefly recall the argument. The point is that 
   \[ A^2(X)= A^2_{(0)}(X)\oplus A^2_{(2)}(X)\ ,\]
   and so one just needs to prove that $A^2_{(2)}(X)$ consists of algebraically trivial cycles. But
 Shen--Vial have proven \cite[Theorems 2.2 and 2.4]{SV} that there is a correspondence $L\in A^2(X\times X)$ such that
   \[  A^2_{(2)}(X) = L_\ast A^4_{(2)}(X)\ .\]
   Now, zero-cycles in $A^4_{(2)}(X)$ are (homologically trivial and hence) algebraically trivial. Since algebraic equivalence is an adequate equivalence relation, it is preserved by correspondences and hence $A^2_{(2)}(X)$ consists of algebraically trivial cycles. The claim, and the theorem, are now proven.
  \end{proof}

\vskip0.5cm

\section{Double EPW sextics}

\begin{notation} Double EPW sextics have been constructed by O'Grady \cite{OG2}, \cite{OG4}, \cite{OG5} as double covers $X\to Z$ of certain sextic hypersurfaces $Z\subset\PP^5$.
They form a locally complete family of hyper-K\"ahler varieties of $K3^{[2]}$-type. By construction, they come with a covering involution $\iota$ which is anti-symplectic.
\end{notation}

Here is the second result of this paper:

\begin{theorem}\label{main2} Let $X$ be a double EPW sextic with covering involution $\iota$. Let
  \[ R^\ast(X):=\bigl\langle  B^1(X), B^2(X) \bigr\rangle\ \ \subset\ B^\ast(X) \]
  be the graded $\QQ$-algebra generated by divisors and codimension 2 cycles.
  Then $R^i(X)$ injects into cohomology via the cycle class map for $i\ge 3$.
  
  Moreover, 
    \[ R^3(X)= B^1(X)\cdot h^2\ ,\]
    where $h\in B^1(X)$ is the polarization.
   \end{theorem}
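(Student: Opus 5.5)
The plan is to follow the proof of Theorem \ref{main1}, replacing the Shen--Vial decomposition by the analogous multiplicative splitting for double EPW sextics from \cite{LV}, \cite{L}, \cite{Zh}. The case $i=4$ is trivial, since algebraic and homological equivalence agree for zero-cycles. So only $i=3$ needs work.

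For $X$ a double EPW sextic, we use the Chow--K\"unneth decomposition $\{\pi^k_X\}$ of \cite{LV} (made $\iota$-compatible). This gives a splitting $A^\ast(X)=A^\ast_{(\ast)}(X)$ and, exactly as before, an induced splitting $B^i_{(j)}(X):=(\pi^{2i-j}_X)_\ast B^i(X)$. We then need three inputs.

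\begin{enumerate}[(a)]
\item $A^1(X)\cdot A^2_{(0)}(X)\subset A^3_{(0)}(X)$. This is the analogue of \cite[Proposition A.7]{FLV}. I would take it from \cite{L} or \cite{Zh}, where multiplicativity statements for double EPW sextics are proven; failing that, I would deduce it from the general $K3^{[2]}$-type result of \cite{FLV}.
\item $A^3_{(0)}(X)$ injects into cohomology. This is presumably in \cite{L} as part of the Beauville--Voisin statement for double EPW sextics. More concretely, $A^3_{(0)}(X)$ should be spanned by $h\cdot A^2_{(0)}(X)$, i.e.\ by $h^3$ and classes $h^2\cdot D$, with $A^3_{(0)}(X)\cong H^6(X,\QQ)\cap H^{3,3}$.
\item The key claim $B^2(X)=B^2_{(0)}(X)$, i.e.\ $A^2_{(2)}(X)$ consists of algebraically trivial cycles.
\end{enumerate}

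To prove (c), I would look for an analogue of the Shen--Vial correspondence $L$: a correspondence $\Gamma\in A^\ast(X\times X)$ with $A^2_{(2)}(X)=\Gamma_\ast A^4_{(2)}(X)$. In the $K3^{[2]}$-type setting, $A^2_{(2)}$ should be generated by $L_\ast$ of zero-cycles, where $L$ is the Beauville--Bogomolov class lifted to Chow; \cite{Zh} constructs such a class for EPW sextics. An alternative route goes through motives. The summand $h^{2}_{tr}$-type piece responsible for $A^2_{(2)}$ is a direct summand of $h^4(X)$ that is isomorphic (up to Tate twist) to the piece governing $A^4_{(2)}$, via the Lefschetz-type isomorphisms of \cite{LV}. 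Both are pieces of the motive of a K3-like surface, as in Kimura-finite-type arguments in \cite{L}. Either way, zero-cycles of degree $0$ are algebraically trivial and correspondences preserve algebraic equivalence, so (c) follows. This step is the main obstacle: one must make sure the needed correspondence is available for \emph{all} double EPW sextics, including the singular-sextic cases allowed by the definition, not just the very general one.

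Granting (a)--(c), we get
\[ B^1(X)\cdot B^2(X)=B^1(X)\cdot B^2_{(0)}(X)\subset B^3_{(0)}(X), \]
which injects into cohomology by (b). This gives injectivity of $R^3(X)$. For the ``moreover'' part, $R^3(X)=B^1(X)\cdot B^2(X)$ already contains $B^1(X)\cdot h^2$. Conversely, any element of $R^3(X)$ has cycle class in $H^6(X,\QQ)\cap H^{3,3}$. By hard Lefschetz ($\cup h^2\colon H^2\cong H^6$) together with the Lefschetz $(1,1)$ theorem, that class equals the class of $D\cdot h^2$ for some divisor $D$. Injectivity then gives $R^3(X)=B^1(X)\cdot h^2$.
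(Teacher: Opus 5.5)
Granting (a)--(c), your deduction is correct, and your handling of $i=4$ and of the ``moreover'' part matches the paper. The gap is that you transplant the Fano-variety proof without its supporting theorems: for an arbitrary double EPW sextic you do not have (a)--(c), you only indicate where they might come from. \cite[Proposition A.7]{FLV} and \cite[Theorem 3.3]{SV} are statements about the specific Shen--Vial decomposition of $F(Y)$, which is built from the incidence correspondence of lines. There is no general $K3^{[2]}$-type result to fall back on; such varieties are not even known to admit a multiplicative Chow--K\"unneth decomposition in general. Your description of $A^3_{(0)}(X)$ as spanned by $h^3$ and $h^2\cdot D$ is likewise a guess. Above all, (c) is not proved. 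It would need either a Chow lift $L$ of the Beauville--Bogomolov class with the Shen--Vial property $A^2_{(2)}(X)=L_\ast A^4_{(2)}(X)$, or Kimura finite-dimensionality of the motive of $X$, and neither is known for all double EPW sextics. So the argument stops exactly at the step you flag as the main obstacle.

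The missing idea is to use the covering involution $\iota$ instead of a bigrading. Then you only need to kill the anti-invariant part of the Griffiths group, not show $B^2(X)=B^2_{(0)}(X)$. Split $B^2(X)=B^2(X)^+\oplus B^2(X)^-$.

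The invariant part costs nothing. By \cite[Theorem 3.1]{L}, the subgroup $\langle A^1(X),A^2(X)^+,c_j(T_X)\rangle\cap A^3(X)$ injects into cohomology, whatever $B^2_{hom}(X)^+$ may be.

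For the anti-invariant part, $H^2(X,\QQ)^+=\QQ h$ and $H^4=\sym^2 H^2$ give $H^4(X,\QQ)^-=h\cdot H^2(X,\QQ)^-$. Hence each $\gamma^-\in B^2(X)^-$ differs from some $h\cdot D$, with $D\in B^1(X)^-$, by an element $\delta\in B^2_{hom}(X)^-$. Since $\Delta^-_X$ acts as the identity on anti-invariant classes, the relation $\Delta_X^-=F+I\cdot G$ in $B^4(X\times X)$ of Proposition \ref{deltaminus} (from \cite{L}, based on \cite{Zh}) gives $\delta=(F+I\cdot G)_\ast\delta$. Both terms vanish:
\begin{itemize}
\item $F$ is decomposable, so it kills homologically trivial classes;
\item by the projection formula, each summand of $I\cdot G$ acts on $B^2_{hom}(X)$ through $B^0_{hom}(X)$, $B^1_{hom}(X)$ or $B^4_{hom}(X)$, all of which are zero.
\end{itemize}
Thus $B^2(X)^-=h\cdot B^1(X)^-$. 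Therefore $R^3(X)=B^1(X)\cdot B^2(X)^+ + h\cdot B^1(X)\cdot B^1(X)^-$ lies in the image of that injective subspace of $A^3(X)$.
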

  
  \begin{proof} The ``moreover'' part follows from the first statement combined with the ``weak Lefschetz'' isomorphism in cohomology
    \[  \cup h^2\colon\ \ H^2(X,\QQ)\ \xrightarrow{\cong}\ H^6(X,\QQ)\ .\]
    
    To prove the first statement, we make the following claim:
    
    \begin{claim} 
    There is equality 
      \[ B^2(X)= B^2(X)^+ \oplus B^1(X)^+\cdot B^1(X)^-\ ,\]
      where we write $B^\ast(X)^+$ and $B^\ast(X)^-$ for the $\iota$-invariant, resp. $\iota$-anti-invariant part of $B^\ast(X)$.
    \end{claim}
    
    The claim implies the theorem, since it is known that the subgroup of the Chow group
    \[ \langle A^1(X), A^2(X)^+, c_j(T_X)\rangle\cap A^3(X) \]
    injects into cohomology, under the cycle class map \cite[Theorem 3.1]{L}.
    
  To prove the claim, let us invoke the following equality of correspondences, involving the projector on the $\iota$-anti-invariant part $\Delta_X^-:={1\over 2}(\Delta_X-\Gamma_\iota)$:
  
  \begin{proposition}[\cite{L}]\label{deltaminus}

Let $X$ be a smooth double EPW sextic, and let $\iota$ denote its covering involution. There is a relation

\[
\Delta_X^-= F +  I\cdot G \ \ \hbox{in}\  B^4(X\times X)\ ,
\]
with the following properties:
\begin{enumerate}
\item $F$ and $G$ are decomposable correspondences, i.e.

$$F\ ,\ G\ \ \in \langle (p_1)^*B^*_{}(X), (p_2)^*B^\ast_{}(X) \rangle\ $$
where $p_1, p_2$ denote projections to the first resp. second factor\, ;
%
%

\item $I$ is a codimension 2 cycle, i.e.
\[ I  \in \ \  B^2_{}(X\times X)\ .\]
%
\end{enumerate}

    \end{proposition}
    
\begin{proof} This is \cite[Equality (8) in Proof of Proposition 3.2]{L}, which is based on work of Zhang \cite{Zh}.
    \end{proof}

Let us now prove the claim. 
Given $\gamma\in B^2(X)$, we may decompose
  \[ \gamma= \gamma^+ + \gamma^-\ \ \hbox{in}\ B^2(X) \ ,\]
  where $\gamma^+\in B^2(X)^+$ is $\iota$-invariant and $\gamma^-\in B^2(X)^-$ is  $\iota$-anti-invariant.
  
  We observe that there is an isomorphism
     \[ \cup h\colon \ H^2(X,\QQ)^- \ \xrightarrow{\cong}\  H^4(X,\QQ)^- \]
  (indeed, $H^4(X,\QQ)=\sym^2 H^2(X,\QQ)$ and $H^2(X,\QQ)^+=\QQ[h]$), and so there exists a divisor $D\in B^1(X)^-$ such that $\gamma^-=h\cdot D$ in cohomology, i.e.
   \[ \gamma^- -h\cdot D\ \ \in B^2_{hom}(X)^-\ .\]  
  
    Applying the equality of correspondences of Proposition \ref{deltaminus} to the cycle class $ \gamma^- -h\cdot D \in B^2_{hom}(X)^- $, we obtain an equality 
   \begin{equation}\label{act}  \gamma^- - h\cdot D = \bigl( F +  I\cdot G\bigr){}_\ast (\gamma^- -h\cdot D)\ \ \hbox{in}\ B^2(X)\ .\end{equation}
Let us now check that the right-hand side of \eqref{act} is zero. First, the correspondence $F$, being decomposable, acts as zero on $B^\ast_{hom}(X)$ (this is because the action of a decomposable correspondence factors over $B^\ast_{hom}(point)=0$). 

Next, the correspondence $ G$ can be written as
  \[  G = (p_1)^\ast(a) + \sum_i (p_1)^\ast(E_i)\cdot (p_2)^\ast(E_i^\prime) + (p_2)^\ast(a^\prime)\ \ \hbox{in}\ B^2(X\times X)\ ,\]
  where $a,a^\prime\in B^2(X)$ and $E_i, E_i^\prime\in B^1(X)$.
  We thus need to prove that
  \begin{equation}\label{this}  \Bigl( I\cdot (p_1)^\ast(a) + I\cdot \sum_i (p_1)^\ast(E_i)\cdot (p_2)^\ast(E_i^\prime) + I\cdot (p_2)^\ast(a^\prime) \Bigr){}_\ast (  \gamma^- -h\cdot D )=0 \ .\end{equation}  
   
We will establish this in 3 steps, starting with the piece of the form $I\cdot (p_2)^\ast(a^\prime)$. Applying the projection formula, we find equality
  \[  \bigl(    I\cdot (p_2)^\ast(a^\prime)  \bigr){}_\ast (\delta)  =    a^\prime\cdot \bigl( I_\ast(\delta)\bigr)\ \ \hbox{in}\ B^2(X)\ ,\ \ \forall \delta\in B^2_{hom}(X)\ .\]
But $I_\ast(\delta)$ is in $B^0_{hom}(X)=0$ and so 
  \[  \bigl(    I\cdot (p_2)^\ast(a^\prime)  \bigr){}_\ast (\delta)  =   0\ \ \hbox{in}\ B^2(X)\ \ \ \forall \delta\in B^2_{hom}(X)\ .\]
 
  Next, let us consider the action of correspondences of the form $I\cdot (p_1)^\ast(E_i)\cdot (p_2)^\ast(E_i^\prime)$. Again using the projection formula, we find that
  \[            \bigl(    I\cdot (p_1)^\ast(E_i)\cdot (p_2)^\ast(E_i^\prime)  \bigr){}_\ast (\delta)  =    E_i^\prime\cdot \bigl( I_\ast(\gamma\cdot E_i)\bigr)\ \ \hbox{in}\ B^2(X)\ ,\ \ \forall \delta\in B^2_{hom}(X)\ .\]
  Noting that $ I_\ast(\delta\cdot E_i)\in B^1_{hom}(X)=0$ we conclude that
       \[            \bigl(    I\cdot (p_1)^\ast(E_i)\cdot (p_2)^\ast(E_i^\prime)  \bigr){}_\ast (\delta)  = 0  \ \ \forall   \delta\in B^2_{hom}(X)\ .\]     
    
 Finally, let us analyze the action of $I\cdot (p_1)^\ast(a)$.   
      Using the projection formula, we find
    \[    \bigl(I\cdot (p_1)^\ast(a)\bigr){}_\ast (\delta)=      
                                                                     I_\ast  ( \delta\cdot a )\ .\]
   But $\delta\cdot a\in B^4_{hom}(X)=0$ (algebraic and homological equivalence coincide for zero-cycles), and so
   \[  \bigl(I\cdot (p_1)^\ast(a)\bigr){}_\ast (\delta)= 0\ \ \hbox{in}\ B^2(X)\ \ \forall\ \delta\in B^2_{hom}(X)\ .\]
    Equality \eqref{this}, and hence the theorem, are proven.                                                              
        \end{proof}

%
%
%
%

\vskip1cm

 \begin{nonumberingt} Thanks to the referee for pertinent and helpful comments.
Many thanks to Kai for his impeccable and beautiful playing of Beethoven's Rondo Op. 51 no. 1 $\twonotes\twonotes$
\end{nonumberingt}

\end{document}